\newtheorem{thm}{Theorem}[section]
\newtheorem{cor}[thm]{Corollary}
\newtheorem{lem}[thm]{Lemma}
\newtheorem{exam}[thm]{Example}
\numberwithin{equation}{section}
\begin{document}

\title{Rings generated by idempotents and nilpotents}

\author{Huanyin Chen}
\author{Marjan Sheibani}
\address{
School of Mathematics\\ Hangzhou Normal University\\ Hang -zhou, China}
\email{<huanyinchenhz@163.com>}
\address{Farzanegan Campus, Semnan University, Semnan, Iran}
\email{<m.sheibani@semnan.ac.ir>}

\subjclass[2010]{16U99, 16E50.} \keywords{idempotent; tripotent; nilpotent; strongly 2-nil-clean ring; Zhou nil-clean ring.}

\begin{abstract} We present new characterizations of the rings in which every element is the sum of two idempotents and a nilpotent that commute,
and the rings in which every element is the sum of two tripotents and a nilpotent that commute. We prove that such rings are completely determined by the additive decompositions of their square elements. These improve the results of Chen and Sheibani[J. Algebra Appl., 16, 1750178(2017)] and Zhou [J. Algebra Appl., 16, 1850009(2017)].
\end{abstract}

\maketitle

\section{Introduction}

A ring $R$ is strongly 2-nil-clean ring if every element in $R$ is the sum of two idempotents and a nilpotent that commute. An element $p\in R$ is tripotent if $p^3=p$. A ring $R$ is Zhou nil-clean ring if every element in $R$ is the sum of two tripotents and a nilpotent that commute. Many elementary properties and structure theorems of such rings were investigated in~\cite{A1,CS,CS2,CS3,K2,Y,Z}. In this paper we shall characterize preceding rings by means of the additive decomposition of their
square elements. These improve the known results, e.g., ~\cite[Theorem 16]{A}, ~\cite[Theorem 2.8]{CS} and ~\cite[Theorem 2.11]{Z}.

In Section 2, we prove that a ring $R$ is strongly 2-nil-clean if and only every square element in $R$ is the sum of two idempotents and a nilpotent that commute if and only if every square element in $R$ is the sum of an idempotent, an involution and a nilpotent that commute.

In Section 3, we further prove that a ring $R$ is Zhou nil-clean if and only if every square element in $R$ is the sum of two tripotents and a nilpotent that commute if and only if every square element in $R$ is the sum of a tripotent, an involution and a nilpotent that commute.

Throughout, all rings are associative with an identity. We use $N(R)$ to denote the set of all nilpotents in $R$. $a\in R$ is square if $a=x^2$ for some $x\in R$. $a\equiv b (mod~N(R))$ means that $a-b\in N(R)$.

\section{strongly 2-nil-clean rings}

In this section, we establish new characterizations of strongly 2-nil-clean rings by means of their square elements. We begin with

\begin{lem} The following are equivalent for a ring $R$:\end{lem}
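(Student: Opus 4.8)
I read the displayed lemma as asserting the equivalence, for a ring $R$, of: (1)~$R$ is strongly 2-nil-clean; (2)~$J(R)$ is nil and $R/J(R)$ is tripotent (i.e.\ satisfies the identity $x^{3}=x$); (3)~$a-a^{3}\in N(R)$ for every $a\in R$. The plan is to run the cycle $(2)\Rightarrow(3)\Rightarrow(1)\Rightarrow(2)$. The implication $(2)\Rightarrow(3)$ is immediate, since $\bar a=\bar a^{3}$ in $R/J(R)$ gives $a-a^{3}\in J(R)\subseteq N(R)$. For $(1)\Rightarrow(2)$ --- essentially \cite[Theorem~2.8]{CS} --- one uses that $J(R)$ is nil for strongly 2-nil-clean rings and that $R/J(R)$, being semiprimitive and strongly 2-nil-clean, has each primitive image isomorphic to $\mathbb{Z}_{2}$ or $\mathbb{Z}_{3}$: a division-ring quotient has only the idempotents $0,1$, so all its elements lie in $\{0,1,2\}$; a larger primitive quotient would contain a $2\times2$ matrix over $\mathbb{Z}_{2}$ with irreducible quadratic characteristic polynomial, whose centralizer is a field and which is therefore not the sum of two commuting idempotents and a commuting nilpotent. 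Hence $R/J(R)$ is a subdirect product of copies of $\mathbb{Z}_{2}$ and $\mathbb{Z}_{3}$ and satisfies $x^{3}=x$.

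The substantive step is $(3)\Rightarrow(1)$. Fix $a\in R$. Since $a^{2}-a^{4}=a(a-a^{3})\in N(R)$, the element $a^{2}$ is idempotent modulo the nil ideal $N(R)\cap\mathbb{Z}[a]$ of the commutative ring $\mathbb{Z}[a]$; I lift it to an idempotent $e\in\mathbb{Z}[a]$ with $e\equiv a^{2}\pmod{N(R)}$, so $ea=ae$. Then $\bigl(a(1-e)\bigr)^{2}=a^{2}(1-e)\equiv e(1-e)=0$, hence $a(1-e)\in N(R)$, and $a=ae+a(1-e)$ already splits off a commuting nilpotent. It remains to decompose $b:=ae=ea$ in the corner ring $eRe$ (with identity $e$), where $b^{2}=a^{2}e\equiv e\pmod{N(R)}$. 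Applying~(3) to $a=2$ gives $6=2-2^{3}\in N(R)$, so $6e$ is a central nilpotent of $eRe$; choosing $m$ with $6^{m}e=0$ and integers with $1=2^{m}u+3^{m}v$, the elements $c=3^{m}ve$ and $c'=2^{m}ue$ are complementary idempotents of $eRe$ which are $\mathbb{Z}$-multiples of $e$ (hence central in $eRe$ and commuting with $b$) and satisfy $2^{m}c=0=3^{m}c'$. In $cRc$, where $2$ is nilpotent, $b_{c}:=cb$ has $b_{c}^{2}\equiv c$, so $(b_{c}-c)^{2}=-2(b_{c}-c)+(\text{nilpotent})$ is nilpotent (as $2c$ is nilpotent and central), whence $b_{c}=c+0+(\text{nilpotent})$. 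In $c'Rc'$, where $3$ is nilpotent, $b_{c'}:=c'b$ has $b_{c'}^{2}\equiv c'$; using $-2\equiv1\pmod3$, the element $t:=b_{c'}-c'$ satisfies $t^{2}-t\equiv0$, hence is idempotent modulo a nil ideal and lifts, within $\mathbb{Z}[b_{c'}]$, to an idempotent $g$ with $b_{c'}\equiv c'+g$ --- a sum of the commuting idempotents $c'$ and $g$. Recombining the two corners gives $b=e+g+w$ with $e,g$ idempotents (polynomials in $b$, hence commuting with $b$ and $e$) and $w\in N(eRe)$, all pairwise commuting; then $a=e+g+\bigl(w+a(1-e)\bigr)$, and since $w\in eRe$ and $a(1-e)\in(1-e)R(1-e)$ lie in orthogonal corners (using $ea=ae$), the two nilpotent summands commute, their sum is nilpotent, and every summand commutes with $a$. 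This exhibits $R$ as strongly 2-nil-clean.

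The hard part is $(3)\Rightarrow(1)$, and inside it the characteristic-$2$/characteristic-$3$ splitting of the corner $eRe$: one has to arrange the splitting idempotents $c,c'$ and the auxiliary idempotent $g$ to be polynomials in $b$, so that commutativity with $a$ is automatic, and then verify that reassembling the data across the Peirce decomposition $R=eRe\oplus eR(1-e)\oplus(1-e)Re\oplus(1-e)R(1-e)$ preserves pairwise commutativity and the nilpotency of the combined remainder. A secondary point, only if $(1)\Rightarrow(2)$ is argued from scratch rather than quoted, is the standard but not one-line fact that a primitive strongly 2-nil-clean ring is $\mathbb{Z}_{2}$ or $\mathbb{Z}_{3}$.
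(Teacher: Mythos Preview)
The three conditions in the paper's lemma are: (1)~$R$ is strongly 2-nil-clean; (2)~$a-a^{3}\in N(R)$ for all $a\in R$; (3)~every element of $R$ is the sum of a tripotent and a nilpotent that commute. Your condition~(3) coincides with the paper's~(2), your Jacobson-radical condition does not appear, and the paper's~(3) is absent from your list. The paper itself gives no argument---its entire proof is the sentence ``See \cite[Theorem~2.3 and Theorem~2.8]{CS}''---so your self-contained treatment of $(1)\Leftrightarrow(2)$ (paper's labeling) already goes well beyond what the paper supplies. That argument is correct: everything lives in the commutative subring $\mathbb{Z}[a]$, so the lift of $a^{2}$ to an idempotent $e$, the Chinese-remainder splitting $e=c+c'$ of $eRe$ coming from $6\in N(R)$, the separate characteristic-$2$ and characteristic-$3$ analyses, and the Peirce reassembly showing $w+a(1-e)$ nilpotent are all sound. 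One notational wrinkle: when you ``lift within $\mathbb{Z}[b_{c'}]$'' you need the subring of the corner $c'Rc'$ with identity $c'$, not the unital subring of $R$, so that the lifted idempotent $g$ actually lies in $c'Rc'$; this is evidently what you intend.

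The genuine gap relative to the paper's statement is its condition~(3). The implication $(3)\Rightarrow(2)$ is immediate: if $a=p+w$ with $p^{3}=p$, $pw=wp$ and $w$ nilpotent, expanding $a-a^{3}$ and using $p^{3}=p$ shows it lies in $N(R)$. For $(2)\Rightarrow(3)$ one needs a tripotent lift inside $\mathbb{Z}[a]$: using $6\in N(R)$, split $\mathbb{Z}[a]$ into a factor with $2$ nilpotent (there $a-a^{3}$ nilpotent forces $a^{2}-a$ nilpotent, so an idempotent lift, which is in particular a tripotent, suffices) and a factor with $2$ invertible (there \cite[Lemma~2.6]{Z} produces a tripotent in $\mathbb{Z}[a]$ congruent to $a$ modulo nilpotents). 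Without this additional step your cycle does not close on the lemma as actually stated.
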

\begin{enumerate}
\item [(1)] {\it $R$ is strongly 2-nil-clean.}
\vspace{-.5mm}
\item [(2)] {\it For any $a\in R$, $a-a^3\in N(R)$.}
\vspace{-.5mm}
\item [(3)] {\it Every element in $R$ is the sum of a tripotent and a nilpotent that commute.}
\end{enumerate}
\begin{proof} See ~\cite[Theorem 2.3 and Theorem 2.8]{CS}.\end{proof}

\begin{thm} The following are equivalent for a ring $R$:\end{thm}
\begin{enumerate}
\item [(1)] {\it $R$ is strongly 2-nil-clean.}
\vspace{-.5mm}
\item [(2)] {\it Every square element in $R$ is the sum of an idempotent and a nilpotent that commute.}
\vspace{-.5mm}
\item [(3)] {\it Every square element in $R$ is the sum of two idempotents and a nilpotent that commute.}
\vspace{-.5mm}
\item [(4)] {\it Every square element in $R$ is the sum of three idempotents and a nilpotent that commute.}
\end{enumerate}
\begin{proof} $(1)\Rightarrow (2)$ For any $a\in R$, it follows by Lemma 2.1 that $a-a^3\in N(R)$. Hence $a^2-(a^2)^2=a(a-a^3)\in N(R)$. In view of~\cite[Lemma 3.5]{Y}, $a^2$ is the sum of an idempotent and a nilpotent that commute.

$(2)\Rightarrow (3)\Rightarrow (4)$ These are trivial.

$(4)\Rightarrow (1)$ Step 1. By hypothesis, there exist idempotents $e,f,g\in R$ and a nilpotent $w\in R$ that commute such that
$2^2=e+f+g+w$. Hence $4e=e+ef+eg+ew$, and so $3e=ef+eg+ew$. This implies that $3ef=ef+efg+efw$; whence,
$2ef=efg+efw$. Therefore $$2ef=(2ef)^2-2ef=(efg+efw)^2-(efg+efw)\in N(R).$$ Likewise, we have $2eg, 2fg\in N(R)$.
Accordingly, $$\begin{array}{lll}
12&=&4^2-4\\
&=&(e+f+g+w)^2-(e+f+g+w)\\
&\equiv &2(ef+eg+fg) (mod~N(R)),
\end{array}$$ i.e., $6^2=3\times 12\in N(R)$. Hence $6\in N(R)$.
Write $2^n\times 3^n=0$ for some $n\in {\Bbb N}$. Since $2^nR\bigcap 3^nR=0$, we have $R\cong R_1\times R_2$, where $R_1=R/2^nR, R_2=R/3^nR$.

Step 2. Let $a\in R$. Then there exist idempotents $e,f,g\in R$ and a nilpotent $w\in R$ that commute such that $(a+1)^2=e+f+g+v$, and so
$a^2+2a=e-(1-f)+g+w=e+h-k+w$, where $h=fg, k=(1-f)(1-g)$. We easily check that $h^2=h,k^2=k$ and $hk=kh=0$.
Hence, we have
$$\begin{array}{rll}
(a^2+2a)^3&\equiv &(e+h+k+2eh-2ek)(e+h-k)\\
&\equiv &e+h-k+6eh\\
&\equiv &a^2+2a (mod~N(R)).
\end{array}$$
Therefore $(a^2+2a)^3-(a^2+2a)\in N(R).$ Likewise, $(a-1)^2$ is the sum of three idempotents and a nilpotent that commute.
As the preceding discussion, we have $(a^2-2a)^3-(a^2-2a)\in N(R).$

Case 1. $a\in R_1$. Then $2\in N(R_1)$. Hence, $(a^3-a)(a^3+a)=a^6-a^2\in N(R_1)$. This implies that
$(a^3-a)^2\in N(R_1)$, i.e., $a-a^3\in N(R_1)$.

Case 2. $a\in R_2$. Since $3\in N(R_2)$, we get  $$\begin{array}{c}
(a^2-a)^3-(a^2-a)\in N(R_2),\\
(a^2+a)^3-(a^2+a)\in N(R_2).
\end{array}$$
Moreover, we have $$\begin{array}{c}
(a^6-a^3)-(a^2-a)\in N(R_2),\\
(a^6+a^3)-(a^2+a)\in N(R_2).
\end{array}$$ Thus, $2a^3-2a\in N(R_2)$. Clearly, $2\in R_2^{-1}$, and then $a-a^3\in N(R_2)$.

Therefore for any $a\in R$, we have $a-a^3\in N(R)$. In light of Lemma 2.1, $R_2$ is strongly 2-nil-clean.\end{proof}

We note that "three idempotents" in the proceeding theorem can not be replaced by "four idempotents" as the following shows.

\begin{exam} Let $R={\Bbb Z}_5$. Then every square element in $R$ is the sum of four idempotents and a nilpotent that commute.
But $R$ it is not strongly 2-nil-clean.
\end{exam}\begin{proof} Obviously, $\{ x^2\mid x\in R\}=\{ 0,1,4\}$. For any $a\in R$, we see that $a^2$ is the sum of four idempotents that commute.
As $2\neq 2^3$, $R$ is not strongly 2-nil-clean.\end{proof}

An element $v\in R$ is an involution if $v^2=1$. As a consequence of Theorem 2.2, we now derive

\begin{thm} The following are equivalent for a ring $R$:\end{thm}
\begin{enumerate}
\item [(1)] {\it $R$ is strongly 2-nil-clean.}
\vspace{-.5mm}
\item [(2)] {\it Every square element in $R$ is the sum of an idempotent, an involution and a nilpotent that commute.}
\end{enumerate}
\begin{proof} $(1)\Rightarrow (2)$ Let $a\in R$. In view of Theorem 2.2, there exist an idempotent $e\in R$ and a nilpotent $w\in R$ that commute such that $a^2=e+w$.
Hence $a^2=(1-e)+(2e-1)+w$ with $(1-e)^2=1$ and $(2e-1)^2=1$. That is, $a^2$ is the sum of an idempotent, an involution and a nilpotent that commute.

$(2)\Rightarrow (1)$ Write $2^2=e+v+w$, where $e^2=e,v^2=1$ and $w\in N(R)$ that commute.
Then $$16\equiv e+2ev+1\equiv (4-v)+2(4-v)v+1\equiv 3+5v (mod~N(R)).$$ This implies that $13\equiv 5v (mod~N(R))$.
Hence $169\equiv 25v^2=25 (mod~N(R))$, and then $2^4\times 3^2\in N(R)$. That is, $2\times 3=6\in N(R)$.
Write $2^n\times 3^n=0$ for some $n\in {\Bbb N}$. Then we have
$R\cong R_1\times R_2$, where $R_1=R/2^nR$ and $R_2=R/3^nR$.

Step 1. Let $a\in R_1$. Then there exist $e,v,w\in R_1$ such that $a^2=e+v+w$, $e^2=e,v^2=1$ and $w\in N(R_1)$ that commute. Clearly, $2\in N(R_1)$. Then we have
$$\begin{array}{lll}
a^8&\equiv &(e+v)^4\equiv e+1 (~\mbox{mod}~N(R_1)),\\
a^{12}&=&a^8a^4\equiv (e+1)(e+1)\equiv e+1 (~\mbox{mod}~N(R_1)).
\end{array}$$ Hence $a^8(1-a^2)(1+a^2)=a^8(1-a^4)=a^8-a^{12}\in N(R_1)$,
and so $a^6(a-a^3)^2=a^8(1-a^2)^2\in N(R_1)$. This implies that $(a-a^3)^8=a^6(a-a^3)^2(1-a^2)^6\in N(R_1)$, i.e., $a-a^3\in N(R_1)$. According to Lemma 2.1,
$R_1$ is strongly 2-nil-clean.

Step 2. Let $a\in R_2$. Then there exist $e,v,w\in R_2$ such that $a^2=e+v+w$, $e^2=e,v^2=1$ and $w\in N(R_2)$ that commute. Since $3\in N(R_2)$, we have
$a^6\equiv (e+v)^3\equiv e+v \equiv a^2 (mod~N(R_2)).$ Hence $a^2-a^6\in N(R_2)$.
Clearly, $2\in R_2^{-1}$. Set $p=\frac{a^4+a^2}{2},q=\frac{a^4-a^2}{2}$. We compute that $$\begin{array}{c}
p^2-p=\frac{1}{4}(a^8+2a^6-a^4-2a^2)=\frac{1}{4}(a^2+2)(a^6-a^2),\\
q^2-q=\frac{1}{4}(a^8-2a^6-a^4+2a^2)=\frac{1}{4}(a^2-2)(a^6-a^2).
\end{array}$$ Hence $p^2-p,q^2-q\in N(R_2)$. In light of~\cite[Lemma 3.5]{Y},
we can find two idempotents $g,h\in {\Bbb Z}[a]$ such that $p-g,q-h\in N(R_2)$.
Hence $a^2=g-h+w$ for some $w\in {\Bbb Z}[a]$. As $3\in N(R_2)$, we see that
$a^2=g+h+h+(w-3h)$ with $w-3h\in N(R_2)$. Therefore
$a^2$ is the sum of three idempotents and a nilpotent that commute.
According to Theorem 2.2, $R_2$ is strongly 2-nil-clean.

Therefore $R\cong R_1\times R_2$ is strongly 2-nil-clean, as asserted.\end{proof}

\section{Zhou nil-clean rings}

The aim of this section is to further characterize Zhou nil-clean rings by means of the additive decompositions of their square elements.  An element $p\in R$ is 5-potent if $p^5=p$. We have

\begin{lem} The following are equivalent are equivalent for a ring $R$:\end{lem}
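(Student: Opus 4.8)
The plan is to mimic the proof of Lemma 2.1 and of Theorem 2.2, now with the prime $5$ entering on an equal footing with $2$ and $3$; one could simply invoke the known structure theorems for Zhou nil-clean rings in \cite{Z} and \cite{CS2}, but a reasonably self-contained route is as follows. The implication $(3)\Rightarrow(2)$ is immediate: if $a=s+w$ with $s^5=s$, $sw=ws$ and $w\in N(R)$, then inside the commutative subring $\mathbb{Z}[s,w]$ every term of $(s+w)^5$ other than $s^5$ carries a factor of $w$, so $a-a^5\equiv s-s^5=0\pmod{N(R)}$.

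For $(1)\Rightarrow(2)$ I would first record the analogue of the identity ``$a^2-(a^2)^2=a(a-a^3)$'' used in Theorem 2.2: if $p,q$ are commuting tripotents and $t=p+q$, then expanding and using $p^3=p$, $q^3=q$ gives
$$t^3-t=3\,pq(p+q),\qquad t^5-t=15\,pq(p+q).$$
Hence for $a=p+q+w$ with $w\in N(R)$ one has $a-a^5\equiv -15\,pq(p+q)\pmod{N(R)}$, and it remains to see $15\,pq(p+q)\in N(R)$. For this I would use the known fact (\cite{Z},\cite{CS2}) that a Zhou nil-clean ring has $N(R)$ an ideal with $R/N(R)$ commutative and $30\cdot 1\in N(R)$ (dually, from $(2)$ the relation $30=-(2-2^5)\in N(R)$ is immediate). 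Writing $30^n=0$ and using that $2^n,3^n,5^n$ are pairwise comaximal with $2^nR\cap 3^nR\cap 5^nR=0$, one gets $R\cong R_2\times R_3\times R_5$ with $2\in N(R_2)$, $3\in N(R_3)$, $5\in N(R_5)$, and may treat each factor. In $R_3$ and $R_5$ one has $15\in N(R_i)$, so $15\,pq(p+q)$ is already nilpotent; in $R_2$ one first notes that every tripotent $p$ satisfies $p-p^2\in N(R_2)$, since $\bigl(p(1-p)\bigr)^2=-2\,p(1-p)$ and $2\in N(R_2)$, whence $pq(p+q)\equiv p^2q^2(p^2+q^2)\equiv 2\,p^2q^2\equiv 0\pmod{N(R_2)}$. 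This gives $(1)\Rightarrow(2)$.

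For $(2)\Rightarrow(1)$ and $(2)\Rightarrow(3)$ I would use that the pointwise condition $a-a^5\in N(R)$ forces $N(R)$ to be an ideal and $R/N(R)$ to be a commutative ring satisfying $x^5=x$ identically (Jacobson's commutativity theorem for rings with $x^n=x$). After the comaximal splitting above, each $R_i/N(R_i)$ is a reduced commutative algebra over $\mathbb{Z}/p^n$ in which every element is $5$-potent, hence a subdirect product of residue fields among $\mathbb{F}_2,\mathbb{F}_3,\mathbb{F}_5$; by the characteristic $R_2/N(R_2)$ is Boolean and $R_3/N(R_3)$ satisfies $x^3=x$, so $a-a^3\in N(R_2)$ and $a-a^3\in N(R_3)$ and Lemma 2.1 makes $R_2,R_3$ strongly $2$-nil-clean, while in $R_5/N(R_5)$ every element is visibly a sum of two commuting tripotents ($2=1+1$ and $3=(-1)+(-1)$ over $\mathbb{F}_5$). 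Lifting idempotents and tripotents through the nil ideals $N(R_i)$, together with a lifting of fourth roots of unity on the relevant corner of $R_5$ — available because $2$ is invertible in $R_5$, so $(1+\nu)^{-1/4}$ is a finite binomial sum for nilpotent $\nu$ — produces the required decompositions, and realizing the $5$-potent inside $\mathbb{Z}[a]$ gives $(3)$.

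The main obstacle is getting the structure off the ground: showing that the pointwise hypothesis $a-a^5\in N(R)$ already implies $N(R)$ is an ideal and $R/N(R)$ is commutative with $x^5=x$ — this is where one leans on Chacron/Jacobson-type commutativity results rather than on a bare computation — and, on the $5$-local factor, lifting fourth roots of unity modulo a nil ideal, for which invertibility of $2$ is essential. By contrast the identity $t^5-t=15\,pq(p+q)$, the comaximal splitting, and the reduction of the $2$- and $3$-local factors to Lemma 2.1 are routine.
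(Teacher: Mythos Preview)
The paper does not give a proof of this lemma at all: its entire argument is the one line ``See \cite[Theorem 19]{A} and \cite[Theorem 2.11]{Z}.'' So there is no in-paper proof to compare your sketch against --- the result is simply being imported from the literature, exactly as Lemma 2.1 was.

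Your outline goes well beyond this and is essentially correct, but a couple of points are worth flagging. In the direction $(1)\Rightarrow(2)$ you invoke \cite{Z},\cite{CS2} to obtain $30\in N(R)$ and the ideal property of $N(R)$; since those very references already contain the full equivalence $(1)\Leftrightarrow(2)\Leftrightarrow(3)$, the subsequent computation with $t^5-t=15\,pq(p+q)$ and the three-prime splitting is redundant once you have cited them. If you want a self-contained step here, decompose $3=p+q+w$ instead and compare $(p+q)^5$ with $3^5$ to get $120\in N(R)$ directly, in the spirit of the calculation the paper does at the start of Theorem 3.3.

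In the direction $(2)\Rightarrow(1)$ on the $5$-local factor, the ``fourth roots of unity'' device with $(1+\nu)^{-1/4}$ can be made rigorous (the coefficients $\binom{-1/4}{k}$ lie in $\mathbb{Z}[\tfrac12]$ and $2\in R_5^{-1}$), but it is a detour and does not by itself produce \emph{commuting} tripotents summing to $a$. The standard route --- and the one taken in the references the paper cites --- is to stay inside $\mathbb{Z}[a,\tfrac12]$: choose a polynomial $p(x)\in\mathbb{F}_5[x]$ with $p(0)=0$, $p(1)=p(2)=1$, $p(3)=p(4)=-1$, so that $p(\bar a)$ and $\bar a-p(\bar a)$ are tripotents in $R_5/N(R_5)$, and then lift each via \cite[Lemma 2.6]{Z}. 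That keeps everything in a commutative subring automatically. Your identification of the genuine obstacle --- passing from the pointwise condition $a-a^5\in N(R)$ to ``$N(R)$ is an ideal and $R/N(R)$ is commutative'' --- is accurate, and this is indeed where the cited papers do the real work.
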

\begin{enumerate}
\item [(1)] {\it $R$ is Zhou nil-clean.}
\vspace{-.5mm}
\item [(2)] {\it For any $a\in R$, $a-a^5\in N(R)$.}
\vspace{-.5mm}
\item [(3)] {\it Every element in $R$ is the sum of a 5-potent and a nilpotent that commute.}
\end{enumerate}
\begin{proof} See ~\cite[Theorem 19]{A} and ~\cite[Theorem 2.11]{Z}.\end{proof}

\begin{lem} A ring $R$ is Zhou nil-clean if and only if $7\in R^{-1}$ and every square element in $R$ is the sum of four idempotents and a nilpotent that commute.\end{lem}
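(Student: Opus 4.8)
The plan is to prove the two implications separately. In both directions I would first produce a direct-product decomposition $R\cong R_1\times R_2\times R_3$ in which $2$, $3$, $5$ are respectively nilpotent, and then check Zhou nil-cleanness one factor at a time via Lemma~3.1 (quotients of Zhou nil-clean rings, and finite products of Zhou nil-clean rings, being Zhou nil-clean).

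For ``$R$ Zhou nil-clean $\Rightarrow$ the two conditions'': by Lemma~3.1, $a-a^5\in N(R)$ for all $a$. Taking $a=2$ gives $30\in N(R)$, so $30^n=0$ for some $n$; as $7^n$ and $30^n$ are coprime, $7^nu+30^nv=1$ forces $7$ to be a unit, and $30^n=0$ yields $R\cong R/2^nR\times R/3^nR\times R/5^nR$ (as in the proof of Theorem~2.2), the modded-out prime being nilpotent in each factor. It then suffices to show every square in each (again Zhou nil-clean) factor is a sum of four idempotents and a commuting nilpotent, which transfers componentwise to $R$. When $2$ is nilpotent, $a^5-a\equiv a(a-1)^4$ modulo the nilpotents, whence $a^2-a\in N$, so $a^2$ is an idempotent plus a commuting nilpotent (pad with three zeros) by \cite[Lemma~3.5]{Y}. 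When $3$ or $5$ is nilpotent, $a^5\equiv a$ gives $a^6\equiv a^2$, resp.\ $(a^2)^3\equiv a^2$, modulo the nilpotents, $2$ is a unit there, and the $p,q$--argument of the proof of Theorem~2.4 produces orthogonal idempotents $g,h\in{\Bbb Z}[a]$ with $a^2\equiv g-h$; reducing $-h$ modulo $3h$, resp.\ writing $g-h\equiv(g+h)+h+h+h$ modulo $5h$, exhibits $a^2$ as a sum of at most four idempotents and a commuting nilpotent.

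For the converse, assume $7\in R^{-1}$ and that every square is a sum of four idempotents and a commuting nilpotent. The first ingredient is an identity: if $b=e_1+e_2+e_3+e_4+w$ with $e_1,\dots,e_4$ commuting idempotents commuting with the nilpotent $w$, then $\prod_{k=0}^{4}(b-k)\in N(R)$. This comes from an easy induction giving $\prod_{k=0}^{m}\big(\sum_{i=1}^{m}e_i-k\big)=0$ for commuting idempotents (split off $e_m$ via $e_m$ and $1-e_m$), together with the fact that $\prod_{k=0}^{4}(b-k)$ differs from $\prod_{k=0}^{4}\big(\sum e_i-k\big)$ by a multiple of $w$ inside the commutative subring ${\Bbb Z}[e_1,\dots,e_4,w]$. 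Applying it to $a^2=9$ gives $9\cdot 8\cdot 7\cdot 6\cdot 5\in N(R)$; since $7$ is a unit, $2^4\cdot 3^3\cdot 5\in N(R)$, so again $R\cong R_1\times R_2\times R_3$ with $2,3,5$ nilpotent in $R_1,R_2,R_3$ and $7$ a unit in each, and the hypothesis, being inherited by quotients, holds in each factor. The crucial elementary fact is that a sum of four commuting idempotents is, modulo the nilpotents, an idempotent if $2$ is nilpotent, a tripotent if $3$ is nilpotent, and a $5$--potent if $5$ is nilpotent (multinomial expansion, using $\binom{p}{\alpha}\equiv 0\pmod{p}$ unless $\alpha$ is concentrated in one coordinate). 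Hence $a^4\equiv a^2\pmod{N(R_1)}$, $a^6\equiv a^2\pmod{N(R_2)}$ and $a^{10}\equiv a^2\pmod{N(R_3)}$ for all $a$. The first two are enough: $a^4\equiv a^2$ with $2$ nilpotent gives $a^2-a\in N(R_1)$, so $R_1$ is strongly $2$-nil-clean by Lemma~2.1; for $R_2$ one re-runs the $p,q$--argument to write $a^2$ as a sum of three idempotents and a commuting nilpotent and applies Theorem~2.2; thus $R_1$ and $R_2$ are Zhou nil-clean.

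The remaining case $R_3$ is the main obstacle, because $a^{10}\equiv a^2\pmod{N(R_3)}$ is strictly weaker than $a^5\equiv a$ (a primitive $8$th root of unity over ${\Bbb Z}_5$ satisfies the first but not the second), so the hypothesis must be pushed further. I would fix $a$, pass to the reduced commutative ring $\bar A=A/N(A)$ with $A={\Bbb Z}[a]\subseteq R_3$ (so $\bar A$ has characteristic $5$), and use that $x^{10}\equiv x^2\pmod{N(R_3)}$ for every $x$, hence $(\bar a+c)^{10}=(\bar a+c)^2$ in $\bar A$ for every integer $c$. For any prime $P$ of $\bar A$, the images of $a+c$ in the fraction field $F$ of $\bar A/P$ are roots of $X^{10}-X^2=X^2(X^8-1)$, i.e.\ lie in $\{0\}\cup\mu_8(F)$, for all $c\in{\Bbb Z}_5$; since $\mu_4(F)={\Bbb Z}_5\setminus\{0\}$, the set $(\{0\}\cup\mu_8(F))\setminus{\Bbb Z}_5$ has at most four elements, so the five-element coset $\{\tilde a+c:c\in{\Bbb Z}_5\}$ cannot lie entirely off ${\Bbb Z}_5$, forcing $\tilde a\in{\Bbb Z}_5$ and $\tilde a^5=\tilde a$. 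As $P$ is arbitrary and $\bar A$ is reduced, $a^5-a\in N(R_3)$, so $R_3$ is Zhou nil-clean by Lemma~3.1, and therefore so is $R\cong R_1\times R_2\times R_3$. Finally, $7\in R^{-1}$ is used exactly to clear the factor $7$ from $9\cdot 8\cdot 7\cdot 6\cdot 5$: without it a fourth factor $R/7^nR$ behaving like ${\Bbb Z}_7$ would survive, and ${\Bbb Z}_7$ is not Zhou nil-clean — the analogue of Example~2.3.
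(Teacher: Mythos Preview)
Your argument is correct but follows a substantially different path from the paper's, especially in the converse direction.

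For the forward implication the paper simply quotes \cite[Theorem 2.11]{Z} and \cite[Theorem 15]{A}; your factor-by-factor construction with the $p,q$ trick of Theorem~2.4 is a self-contained alternative.

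For the converse, the paper obtains $30\in N(R)$ by writing $9=e+f+g+h+w$, regrouping as $7=\alpha+\beta+w$ with commuting tripotents $\alpha=e-(1-f)$, $\beta=g-(1-h)$, and computing $7^3-7$; your identity $\prod_{k=0}^{4}\big(\sum e_i-k\big)=0$ applied to $9$ is cleaner and gives $9\cdot8\cdot7\cdot6\cdot5\in N(R)$ directly. The deeper divergence is in the three cases. The paper treats all three uniformly by applying the hypothesis to $(a\pm 2)^2$, rewriting $a^2\pm 4a+2$ as a sum $p+q$ of two commuting tripotents, and using Freshman's dream $(p+q)^p\equiv p+q\pmod p$ for $p=2,3,5$; subtracting the $+$ and $-$ versions isolates a multiple of $a^p-a$. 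Your route first deduces only $a^{2p}\equiv a^2\pmod{N(R_p)}$; this suffices for $p=2,3$ (via Lemma~2.1 and Theorem~2.2), but for $p=5$ you need the extra prime-ideal/pigeonhole step inside $\overline{{\Bbb Z}[a]}$ to upgrade $a^{10}\equiv a^2$ to $a^5\equiv a$. That step is sound, and the counting argument (five translates of $\tilde a$ cannot all miss ${\Bbb F}_5$ inside the $\le 9$-element set $\{0\}\cup\mu_8(F)$) is elegant, but it does bring in the prime spectrum of a commutative subring, whereas the paper's shifted-square computation stays entirely elementary and ring-internal. Conversely, your polynomial identity $\prod_{k=0}^{m}(s-k)=0$ and the $p$-potent observation for sums of idempotents modulo $p$ are reusable for other values of $m$ and $p$, which the paper's ad hoc tripotent manipulations are not.
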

\begin{proof} $\Longrightarrow $ In view of Lemma 3.1, $30=2^5-2\in N(R)$. Since $(7, 30)=1$, we can find some $k,l\in {\Bbb N}$ such that
$7k+30l=1$, and so $7\in R^{-1}$. By virtue of~\cite[Theorem 2.11]{Z}, $R\cong A\times B\times C$, where $A=0$ or $A/J(A)$ is Boolean with $J(A)$ nil, $B=0$ or $B/J(B)$ is a subdirect product of ${\Bbb Z}_3$'s with $J(B)$ nil; $C=0$ or $C/J(C)$ is a subdirect product of ${\Bbb Z}_5$'s with $J(C)$ nil.
In light of~\cite[Theorem 15]{A}, every element in $R$ is the sum of four idempotents and a nilpotent that commute.

$\Longleftarrow $ Step 1. By hypothesis, there exists idempotents $e,f,g,h\in R$ and a nilpotent $w\in R$ that commute such that
$3^2=e+f+g+h+w$. Hence $7=\alpha +\beta +w$, where $\alpha =e-(1-f), \beta=g-(1-h).$
Since $e,1-f,g,1-h$ are idempotents, we easily check that $\alpha^3=\alpha, \beta^3=\beta$ and $\alpha\beta=\beta\alpha$. Then $48\times 7=7^3-7\equiv 3\alpha\beta (\alpha+\beta) (mod~N(R)),$ hence, $48(\alpha+\beta)\equiv 3\alpha\beta (\alpha+\beta) (mod~N(R))$. Multiplying both sides by $\alpha\beta$ gives
$$48\alpha\beta(\alpha+\beta)\equiv 3\alpha\beta (\alpha+\beta)\alpha\beta\equiv 3\alpha\beta (\alpha+\beta) (mod~N(R)),$$ and so
$$\begin{array}{lll}
48\times 7\times 15&\equiv&15[3\alpha\beta (\alpha+\beta)]\\
&=&45\alpha\beta(\alpha+\beta)\\
&\equiv& 0 (mod~N(R)).
\end{array}$$ Hence, $2^4\times 3^2\times 5\times 7\in N(R)$. It follows from $7\in R^{-1}$ that
$2^4\times 3^2\times 5\in N(R)$, and so $2\times 3\times 5\in N(R)$. Write $2^n\times 3^n\times 5^n=0$ for some $n\in {\Bbb N}$.
Then $R\cong R_1\times R_2\times R_3$, where $R_1=R/2^nR,R_2=R/3^nR,R_3=R/5^nR$.

Step 2. By hypothesis, there exist idempotents $e,f,g,h\in R$ and a nilpotent $w\in R$ that commute such that $(a+2)^2=e+f+g+h+w$, and so
$a^2+4a+2=e-(1-f)+g-(1-h)+w$. Set $p=e-(1-f)$ and $q=g-(1-h)$. Then $a^2+4a+2=p+q (mod~N(R)), p^3=p, q^3=q$ and $pq=qp$.

Case 1. $a\in R_1$. Then $2\in N(R_1)$. We have
$$\begin{array}{rll}
a^8&\equiv&(p+q)^4\\
&\equiv&p^4+q^4 \\
&\equiv&p^2+q^2\\
&\equiv&a^4 (mod~N(R_1)),
\end{array}$$
Hence $a^3(a-a^5)\in N(R_1)$, and so $(a-a^5)^4=a^3(a-a^5)(1-a^4)^3\in N(R_1).$ Therefore $a-a^5\in N(R_1)$.

Case 2. $a\in R_2$. Then $3\in N(R_2)$. We have
$$\begin{array}{rll}
(a^2+4a+2)^3&\equiv&(p+q)^3 \\
&\equiv&p^3+q^3\\
&\equiv&p+q \\
&\equiv&a^2+4a+2 (mod~N(R_2)),
\end{array}$$
Likewise, we have $(a^2-4a+2)^3-(a^2-4a+2)\in N(R_2).$ So we get $$\begin{array}{c}
(a^2+2)^3+(4a)^3-(a^2+4a+2)\in N(R_2),\\
(a^2+2)^3-(4a)^3-(a^2-4a+2)\in N(R_2).
\end{array}$$
Hence $2(4a)^3-8a\in N(R_2)$, and so $2^3(16a^3-a)\equiv 2^3(a^3-a)\in N(R_2)$. Accordingly, $a^3-a\in N(R_2)$, and so $a-a^5=(1+a^2)(a-a^3)\in N(R_2)$.

Case 3. $a\in R_3$. Then $5\in N(R_3)$. We have
$$\begin{array}{rll}
(a^2+4a+2)^5&\equiv&(p+q)^5\\
&\equiv&p^5+q^5\\
&\equiv&p+q\\
&\equiv&a^2+4a+2 (mod~N(R_3)),
\end{array}$$
Likewise, we get $(a^2-4a+2)^5-(a^2-4a+2)\in N(R_3)$.
Then $$\begin{array}{c}
(a^2+2)^5+(4a)^5-(a^2+4a+2)\in N(R_3),\\
(a^2+2)^5-(4a)^5-(a^2-4a+2)\in N(R_3).
\end{array}$$
Hence $2(4a)^5-8a\in N(R_3)$, and so $2^3(256a^5-a)=2^3(a^5-a)\in N(R)$. Accordingly, $a-a^5\in N(R_3)$.

Therefore $a-a^5\in N(R)$ for all $a\in R$. This completes the proof by Lemma 3.1.\end{proof}

We are now ready to prove the following.

\begin{thm} The following are equivalent for a ring $R$:\end{thm}
\begin{enumerate}
\item [(1)] {\it $R$ is Zhou nil-clean.}
\vspace{-.5mm}
\item [(2)] {\it Every square element in $R$ is the sum of a tripotent and a nilpotent that commute.}
\vspace{-.5mm}
\item [(3)] {\it Every square element in $R$ is the sum of two tripotents and a nilpotent that commute.}
\end{enumerate}
\begin{proof} $(1)\Rightarrow (2)$ In view of~\cite[Theorem 2.11]{Z}, $R\cong R_1\times R_2$, where $R_1$ is strongly 2-nil-clean with $2\in N(R_1)$ and $R_2$ is Zhou nil-clean with $3\times 5\in N(R_2)$.

Let $a\in R_1$. Then $a^2$ is the sum of an idempotent and a nilpotent that commute.

Let $a\in R_2$. In view of ~\cite[Theorem 2.11]{Z}, $a-a^5\in N(R_2)$. Hence,
$a^2-(a^2)^3=a(a-a^5)\in N(R_2)$. Since $2\in R_2^{-1}$, it follows by ~\cite[Lemma 2.6]{Z} that there exists a tripotent $p\in {\Bbb Z}[a]$ such that
$a^2-p\in N(R_2)$.

Therefore every square element in $R$ is the sum of a tripotent and a nilpotent that commute, as required.

$(2)\Rightarrow (3)$ This is trivial.

$(3)\Rightarrow (1)$ By hypothesis, there exist tripotents $e,f\in R$ a nilpotent $w\in R$ that commute such that $2^2=e+f+w$.
We check that $$15(e+f)\equiv 4(4^2-1)=4^3-4\equiv 3ef(e+f) (mod~N(R)).$$ Multiplying both sides by $ef$ gives
$$15ef(e+f)\equiv 3ef(e+f) (mod~N(R)).$$ This implies that $$2^4\times 3\times 5=4\times (4^3-4)\equiv 12ef(e+f)\equiv 0 (mod~N(R)).$$ It follows that
$2\times 3\times 5\in N(R)$. Write $2^n\times 3^n\times 5^n=0$ for some $n\in {\Bbb N}$. Then $$R\cong R_1\times R_2\times R_3, R_1=R/2^nR, R_2=R/3^nR, R_3=R/5^nR.$$

Case 1. $a\in R_1$. Then $2\in N(R_1)$. By hypothesis, there exist tripotents $p,q\in R_1$ and a nilpotent $w\in R_1$ that commute such that $a^2=p+q+w$. Hence we have
$$\begin{array}{rll}
(a^2)^4&\equiv&(p+q)^4\\
&\equiv&p^4+q^4 \\
&\equiv&p^2+q^2\\
&\equiv&a^4 (mod~N(R_2)),
\end{array}$$
Hence $a^3(a-a^5)\in N(R_1)$; whence, $(a-a^5)^4=a^3(a-a^5)(1-a^4)^3\in N(R_1).$ This implies that $a-a^5\in N(R_1)$.

Case 2. $a\in R_2$. Then $3\in N(R_2)$. By hypothesis, there exist tripotents $p,q\in R_2$ and a nilpotent $w\in R_2$ that commute such that $a^2=p+q+w$. Hence we have
$$\begin{array}{rll}
(a^2)^3&\equiv&(p+q)^3 \\
&\equiv&p^3+q^3\\
&\equiv&p+q \\
&\equiv&a^2 (mod~N(R_2)),
\end{array}$$
This implies that $a(a-a^5)\in N(R_2)$, and so $(a-a^5)^2=a(a-a^5)(1-a^4)\in N(R_2)$. Accordingly, $a-a^5\in N(R_2)$.

Case 3. $a\in R_3$. Then $5\in N(R_3)$. By hypothesis, there exist tripotents $p,q\in R_3$ and a nilpotent $w\in R_3$ that commute such that $(a+2)^2=p+q+w$. Hence we get
$$\begin{array}{rll}
(a^2+4a+4)^5&\equiv&(p+q)^5\\
&\equiv&p^5+q^5\\
&\equiv&p+q\\
&\equiv&a^2+4a+4 (mod~N(R_3)),
\end{array}$$
Likewise, we have $(a^2-4a+4)^5\equiv a^2-4a+4 (mod~N(R_3))$.
We derive that $$\begin{array}{c}
(a^2-a-1)^5-(a^2-a-1)\in N(R_3),\\
(a^2+a-1)^5-(a^2+a-1)\in N(R_3).
\end{array}$$
This implies that $$\begin{array}{c}
(a^2-1)^5-a^5-(a^2-1-a)\in N(R_3),\\
(a^2-1)^5+a^5-(a^2-1+a)\in N(R_3).
\end{array}$$
Thus we have $2a^5-2a\in N(R_3)$. Clearly, $2\in R_3^{-1}$, and therefore $a-a^5\in N(R_3)$.

According to Lemma 3.1, $R\cong R_1\times R_2\times R_3$ is Zhou nil-clean.\end{proof}

As a consequence, we now derive:

\begin{cor} The following are equivalent for a ring $R$:\end{cor}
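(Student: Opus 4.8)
The plan is to deduce both implications from Theorem 3.3, and I expect the statement to read: \emph{$R$ is Zhou nil-clean if and only if every square element in $R$ is the sum of a tripotent, an involution and a nilpotent that commute.} The forward direction will mirror the forward direction of Theorem 2.4 — splitting the single tripotent supplied by Theorem 3.3 into a tripotent plus a commuting involution — while the converse, in contrast to the corresponding part of Theorem 2.4, will be immediate.

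For $(1)\Rightarrow(2)$, given $a\in R$ I would invoke Theorem 3.3 to write $a^2=p+w$ with $p$ a tripotent, $w$ a nilpotent, and $pw=wp$. The key step is the elementary splitting of $p$: since $p^3=p$, the element $p^2$ is an idempotent, hence $1-p^2$ is an idempotent and in particular a tripotent; putting $v=p+p^2-1$ and using $p^3=p$, $p^4=p^2$ one checks that $v^2=1$, so $v$ is an involution. Since $1-p^2$ and $v$ lie in ${\Bbb Z}[p]$, they commute with each other and with $w$, and $(1-p^2)+v+w=p+w=a^2$, giving the desired decomposition of the square element $a^2$.

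For $(2)\Rightarrow(1)$, the observation to make is that every involution is a tripotent: $v^2=1$ implies $v^3=v$. Therefore the decomposition in $(2)$ already exhibits each square element as a sum of two tripotents and a nilpotent that commute, which is condition $(3)$ of Theorem 3.3; hence $R$ is Zhou nil-clean.

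I do not expect a genuine obstacle, since all the real work — extracting the relation $2\cdot3\cdot5\in N(R)$ and running the case analysis over $R_1\times R_2\times R_3$ — is already contained in Theorem 3.3. The only things requiring a moment's care are the computation $v^2=1$ and the (automatic) remark that the tripotent $1-p^2$, the involution $v$ and the nilpotent $w$ all commute, both of which follow from the fact that everything in sight is a polynomial in $p$.
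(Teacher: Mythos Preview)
Your proof is correct. The direction $(2)\Rightarrow(1)$ is identical to the paper's, but your $(1)\Rightarrow(2)$ is genuinely simpler than the paper's argument. The paper first uses $30\in N(R)$ to split $R\cong R_1\times R_2$ with $R_1=R/2^nR$ and $R_2=R/15^nR$; in $R_1$ it quotes Theorem~2.4, while in $R_2$ it uses $2\in R_2^{-1}$ to form the idempotents $e=\tfrac{p^2+p}{2}$, $f=\tfrac{p^2-p}{2}$ and then writes $a^2=\big((1-e)-f\big)+(2e-1)+w$. Your identity $p=(1-p^2)+(p+p^2-1)$, valid for any tripotent $p$ in any ring, bypasses both the Chinese Remainder splitting and the need for $2$ to be a unit: the check $(p+p^2-1)^2=1$ uses only $p^3=p$, and $1-p^2$ is an idempotent, hence a tripotent, with all three summands lying in ${\Bbb Z}[p,w]$ and therefore commuting. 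What the paper's route buys is nothing extra here; your argument is strictly more economical.
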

\begin{enumerate}
\item [(1)] {\it $R$ is Zhou nil-clean.}
\vspace{-.5mm}
\item [(2)] {\it Every square element in $R$ is the sum of a tripotent, an involution and a nilpotent that commute.}
\end{enumerate}
\begin{proof}  $(1)\Rightarrow (2)$ Clearly, $2\times 15=30\in N(R)$. Then we can find some $n\in {\Bbb N}$ such that
$$R\cong R_1\times R_2, R_1=R/2^nR, R_2=R/15^nR.$$

Case 1. Let $a\in R_1$. Clearly, $R_1$ is strongly 2-nil-clean. By virtue of Theorem 2.4, $a^2$ is the sum of an idempotent, an involution $u$ and a nilpotent that commute.

Case 2. Let $a\in R_2$. In view of Theorem 3.3, there exist a tripotent $p\in R$ and a nilpotent $w\in R$ that commute such that
$a^2=p+w$. Clearly, $2\in R_2^{-1}$. Let $e=\frac{p^2+p}{2}$ and $f=\frac{p^2-p}{2}$. Then we directly verify that
$$e^2-e=\frac{1}{4}(p^4+2p^3-p^2-2p)=\frac{1}{4}(p+2)(p^3-p).$$
Hence $e^2=e$. Likewise, $f^2=f$. Therefore
$a^2=e-f+w=(1-e)-f+(2e-1)+w$. We check that $[(1-e)-f]^3=(1-e)-f$ and $(2e-1)^2=1$, as desired.

$(2)\Rightarrow (1)$ Since every involution is a tripotent, we complete the proof by Theorem 3.3.\end{proof}

\begin{exam} Let $R=\{ a,b,c,d\}$ be the ring defined by the following operations:
$$ \begin{array}{cc}
\begin{tabular}{c|cccc}
 + & a &b & c & d  \\
  \hline
 a & a &b & c & d   \\
    b & b &a & d & c  \\
   c & c &d & a & b   \\
   d & d &c & b & a  \\
    \end{tabular}&~~~~~~~~~~
    \begin{tabular}{c|cccc}
 $\times$ & a &b & c & d  \\
  \hline
 a & a &a & a & a   \\
    b & a &b & c & d  \\
   c & a &c & d & b   \\
   d & a &d & b &c  \\
    \end{tabular}
    \end{array}$$
Then $x^3$ is an idempotent for all $x\in R$, but $R$ is not Zhou nil-clean.\end{exam}\begin{proof} For all $x\in R$, we check that
$x=x^4$, and so $(x^3)^2=x^3$. That is, $x^3\in R$ is an idempotent. Since $c-c^5=b$ is not nilpotent, $R$ is not Zhou nil-clean.\end{proof}

\begin{exam} Let $R=\{ \left(
\begin{array}{cc}
x&y\\
y&x+y
\end{array}
\right)~\mid~x,y\in {\Bbb Z_3}\}$. Then $R$ is a finite field with $9$ elements.
Let $a\in R$. Then $a=a^9$, and so $a^2=(a^2)^5$, i.e., $a^2$ is 5-potent. Therefore every square element in $R$ can be written as the sum of a 5-potent and a nilpotent that commute. Choose $c=\left(
\begin{array}{cc}
0&1\\
1&1
\end{array}
\right)$. Then $c-c^5=\left(
\begin{array}{cc}
0&-1\\
-1&-1
\end{array}
\right)=\left(
\begin{array}{cc}
1&-1\\
-1&0
\end{array}
\right)^{-1}.$ Hence $c-c^5$ is not nilpotent. According to~\cite[Theorem 2.11]{Z}, $R$ is not Zhou nil-clean.\end{exam}

\vskip10mm

\end{document}